\patchcmd{\@settitle}{\uppercasenonmath\@title}{\scshape\large}{}{}
\patchcmd{\@setauthors}{\MakeUppercase}{\scshape\normalsize}{}{}
\newtheorem{theorem}{Theorem}[section]
\newtheorem{lemma}[theorem]{Lemma}
\newtheorem{corollary}[theorem]{Corollary}
\theoremstyle{definition}
\theoremstyle{remark}
\newtheorem{remark}[theorem]{Remark}
\newcommand{\st}{\text{s.t.}}
\newcommand{\field}{\mathbb}
\newcommand{\reals}{\field{R}}
\newcommand{\R}{\reals}
\newcommand{\abbr}[1][abbrev]{#1.\xspace}
\newcommand{\eg}{\abbr[e.g]}
\newcommand{\ie}{\abbr[i.e]}
\newcommand{\wrt}{\abbr[w.r.t]}
\newcommand{\set}[1]{\{#1\}}
\newcommand{\norm}[1]{\left\lVert#1\right\rVert}
\newcommand{\rev}[1]{#1}
\begin{document}

\title{On Coupling Constraints in Linear Bilevel Optimization}
\author[D. Henke, H. Lefebvre, M. Schmidt, J. Thürauf]%
{Dorothee Henke, Henri Lefebvre, Martin Schmidt, Johannes Thürauf}

\date{\today}

\begin{abstract}
  It is well-known that coupling constraints in linear bilevel
optimization can lead to disconnected feasible sets, which is not
possible without coupling constraints.
However, there is no difference between linear bilevel problems with
and without coupling constraints \wrt\ their complexity-theoretical
hardness.
In this note, we prove that, although there is a clear difference
between these two classes of problems in terms of their feasible sets,
the classes are equivalent on the level of optimal solutions.
To this end, given a general linear bilevel problem with coupling
constraints, we derive a respective problem without coupling
constraints and prove that it has the same optimal solutions (when
projected back to the original variable space).


\end{abstract}

\keywords{Bilevel optimization,
Coupling constraints%
%
%
}
\subjclass[2020]{90Cxx
%
%
}

\maketitle

\section{Introduction}
\label{sec:introduction}

The research interest in bilevel optimization problems increased
significantly over the last years and decades;
see, \eg, \textcite{Dempe_Zemkoho:2020} for a recent overview.
However, and although serious advances have been made both w.r.t.\
theoretical aspects and algorithmic developments, there are still
open questions.
This is even the case for linear bilevel optimization problems that we
consider in this note and that are given by\footnote{For the ease of
  presentation, we omit stating dimensions of matrices and vectors.}
\begin{subequations}
  \label{eq:bilevel-w-ccs}
  \begin{align}
    \min_{x \in X, y} \quad & c^\top x + d^\top y
    \\
    \st \quad & Ax + By \geq a,
    \label{eq:bilevel-w-ccs:ccs} \\
    & y \in S(x),
  \end{align}
\end{subequations}
where $S(x)$ is the set of optimal solutions to the $x$-parameterized
lower-level problem
\begin{equation}
  \label{eq:bilevel-ll}
  \min_{y} \quad f^\top y \quad \st \quad Cx + Dy \geq b
\end{equation}
\rev{and all variables are assumed to be continuous.}
In particular, we consider the optimistic linear bilevel problem, \ie,
the leader is able to choose the $y$ that is the best \wrt\ the
upper-level objective function if there are multiple optimal solutions
to the follower's problem.
Moreover, Problem~\eqref{eq:bilevel-w-ccs} contains coupling
constraints in~\eqref{eq:bilevel-w-ccs:ccs}, \ie, upper-level
constraints that explicitly depend on the lower-level variables.
These coupling constraints are the main topic of this note.
Instead, a bilevel problem with~$B = 0$ does not have any coupling
constraints.
Throughout the remainder of the paper, we assume that the bilevel
problem~\eqref{eq:bilevel-w-ccs} is solvable\rev{, that $X \subset
  \R^n$ is a given polyhedron, and that all vectors and matrices have
  rational entries. Note that the linear bilevel problem always has an optimal
  solution that is at a vertex of the polyhedron obtained by
  intersecting~$X$, the
  constraints in \eqref{eq:bilevel-w-ccs:ccs}, and in
  \eqref{eq:bilevel-ll}; see, e.g., \textcite{Bard:1998}.}

For motivating our main research question, we brief\/ly discuss
coupling constraints in the following both \wrt\ their impact on the
geometry of the bilevel feasible set and their impact on complexity.

\subsection{Geometry of the Feasible Set}
\label{sec:geom-feas-set}

As an example, we consider the following linear bilevel problem taken
from \textcite{Kleinert:2021}:
\begin{subequations}
  \label{eq:lbp-example-thomas-diss}
  \begin{align}
    \min_{x,y} \quad & F(x,y) = x + 6y \\
    \st \quad & -x + 5y \leq 12.5, \label{eq:lbp-example-thomas-diss:cc}\\
                     & y \in S(x),
  \end{align}
\end{subequations}
with $S(x)$ being the set of optimal solutions to the lower-level problem
\begin{subequations}
  \label{eq:lbp-example-thomas-diss-ll}
  \begin{align}
    \min_y \quad & f(x,y) = -y \\
    \st \quad & 2x -y \geq 0, \\
                 & -x -y \geq - 6,\\
                 & -x + 6y \geq -3,\\
                 & x + 3y \geq 3.
  \end{align}
\end{subequations}
Both levels are linear optimization problems and all variables are
continuous.

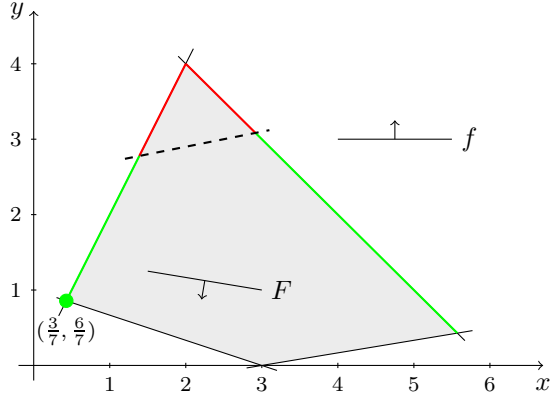
\begin{figure}
  \centering
  \begin{tikzpicture}
  \def\xMin{0.0}
  \def\xMax{6.5}
  \def\yMin{0.0}
  \def\yMax{4.5}
  \def\axEps{0.2}
  \def\rrsColor{red}
  \def\bfsColor{green}

  \node (a) at (3/7, 6/7) {}; 
  \node (b) at (2, 4) {}; 
  \node (c) at (39/7, 3/7) {}; 
  \node (d) at (3, 0) {}; 
  \node (e) at (12.5/9, 25/9) {}; 
  \node (f) at (17.5/6, 18.5/6) {}; 

  \draw[->] (\xMin - \axEps, 0) -- (\xMax + \axEps, 0) node[below, yshift=-1.5pt] {$x$};
  \draw[->] (0, \yMin - \axEps) -- (0, \yMax + \axEps) node[left] {$y$};
  \foreach \x in {1,2,3,4,5,6}
  \draw (\x,1pt) -- (\x,-1pt)
  node[below] {\footnotesize{$\x$}};
  \foreach \y in {1,2,3,4}
  \draw (1pt,\y) -- (-1pt,\y)
  node[anchor=east] {\footnotesize{\y}};

  \fill[fill=gray!15] (a.center)--(b.center)--(c.center)--(d.center);
  \draw[domain=3/7 - 0.5*\axEps : 2 + 0.5*\axEps]
  plot (\x, 2.0 *\x);
  \draw[domain=2 - 0.5*\axEps : 39/7 + 0.5*\axEps]
  plot (\x, 6 - \x);
  \draw[domain=3 - \axEps : 39/7 + \axEps]
  plot (\x, - 0.5 + 1/6 * \x);
  \draw[domain=0.5 - \axEps : 3 + \axEps]
  plot (\x, 1 - 1/3 * \x);
  \draw[\rrsColor, thick] (e.center)--(b.center);
  \draw[\rrsColor, thick] (b.center)--(f.center);
  \draw[\bfsColor, thick] (a.center)--(e.center);
  \draw[\bfsColor, thick] (f.center)--(c.center);
  \draw[dashed, thick, domain=1.4 - \axEps : 2.9 + \axEps] plot (\x, 2.5 +
  1/5*\x);
  \draw[domain=4 : 5.5] plot (\x, 3) node[right] {$f$};
  \draw[->] (4.75, 3)--(4.75, 3.25);
  \draw[domain=1.5 : 3] plot (\x, 1.5 - 1/6 * \x) node[right]
  {$F$};
  \draw[->, rotate around={270:(2.25, 1.125)}] (2.25, 1.125)--(2.5, 1.08333);
  \draw[mark=*,mark size=2.5pt,mark options={color=green}] plot
  coordinates {(3/7,6/7)} node[below, yshift=-0.1cm]
  {\footnotesize{$(\frac{3}{7},\frac{6}{7})$}};


\end{tikzpicture}

  \caption{Visualization of
    Problem~\eqref{eq:lbp-example-thomas-diss}; mainly taken from
    \textcite{Kleinert:2021}.}
  \label{fig:lbp-example}
\end{figure}

The problem is visualized in Figure~\ref{fig:lbp-example}.
The gray area is the set of points that satisfy the lower-level
constraints.
The points above the dashed line are infeasible w.r.t.\ the
coupling constraint~\eqref{eq:lbp-example-thomas-diss:cc}.
Due to the optimization direction of the follower, the green faces
denote the bilevel feasible set.
Note that the two red faces are not part of the bilevel feasible set
since, for the respective $x$-values, the optimal replies~$y$ by the
follower violate the coupling constraint.

Hence, the example shows that the usage of coupling constraints makes
it possible to model bilevel feasible sets that are disconnected.
Theorem~3.3 in~\textcite{Benson:1989} states that the feasible set of
a linear bilevel problem with $B = 0$ is always connected, which means
that disconnected sets can only be modeled using coupling
constraints---an aspect that gained some prominence since it allows to
model mixed-binary linear problems using purely continuous linear
bilevel models; see, \eg, Section~3 in~\textcite{Vicente-et-al:1996}
and Section~3.1 in~\textcite{Audet-et-al:1997}.
Consequently, it seems to be the case that having coupling constraints
introduces larger modeling capabilities.

\subsection{Complexity}
\label{sec:complexity}

Already \textcite{Jeroslow:1985} showed that linear bilevel problems
are NP-hard.
An alternative proof using a reduction from \textsf{KNAPSACK} can be
found in \textcite{Ben-Ayed_Blair:1990}.
Moreover, \textcite{Hansen-et-al:1992} proved that linear bilevel problems
are strongly NP-hard by a reduction from \textsf{KERNEL}; see
Problem~\textsf{GT57} in \textcite{Garey_Johnson:1979}.
In particular, they even showed that the special case of min-max
problems without coupling constraints is strongly NP-hard.
The same holds true for the reduction from \textsf{3-SAT} shown in
\textcite{Marcotte_Savard:2005}.
In \textcite{Vicente-et-al:1994} it is further shown that even
checking local optimality of a given point is strongly NP-hard
(again via \textsf{3-SAT}).
As before, the authors do not require coupling constraints to achieve
this hardness result; see the linear bilevel problems in the proofs of
Theorems~5.1 and 5.2 in \textcite{Vicente-et-al:1994}.
Hence, coupling constraints are not required to make linear bilevel
problems strongly NP-hard.
Moreover, \textcite{Buchheim:2023} recently showed that the decision
versions of linear bilevel problems with coupling constraints are in
NP, which again implies that linear bilevel problems with coupling
constraints are not harder than those without.

\subsection{Research Question}
\label{sec:research-question}

Combining the two last discussions, we can summarize the following two
conclusions:
\begin{enumerate}
\item[(i)] Using coupling constraints in linear bilevel problems
  allows for modeling a richer class of feasible sets.
\item[(ii)] These stronger modeling capabilities do not result in any
  change of the hardness of the resulting problem in terms of
  complexity theory.
\end{enumerate}

Due to (ii), the question arises if we ``really'' increase
modeling capabilities by using coupling constraints.
While this is the case \wrt\ (dis-)connectedness of feasible sets, we
prove that there is no difference on the level of optimal solutions.
To do this, in the next section, given a linear bilevel problem with
coupling constraints, we derive a linear bilevel problem without
coupling constraints that has the same set of optimal solutions.


\section{Exact Penalization of Coupling Constraints}
\label{sec:remov-coupl-constr}

In this section, we show that the bilevel
problem~\eqref{eq:bilevel-w-ccs} with coupling constraints can be
reformulated as a bilevel problem without coupling
constraints. To do this, we show that the violation of the coupling
constraints~\eqref{eq:bilevel-w-ccs:ccs} can be exactly penalized in
the objective function of the leader. This result is surprising for at
least two reasons. First, we just saw that the feasible region of
\eqref{eq:bilevel-w-ccs} may be disconnected and
nonconvex. In this case, standard Lagrangian duality theory is usually
limited and not as strong as in the convex case.
Second, the resulting problem, i.e., after penalization, has
a smooth objective function and no coupling constraint. Thus, it
differs from classic exact penalty methods, which often require a
nonsmooth penalty function.

Our key idea is to reformulate Problem~\eqref{eq:bilevel-w-ccs} in a
way so that the follower measures the violation of the coupling
constraints directly in the lower-level problem,
while the leader enforces that this violation is zero.
Doing so leads to the following bilevel problem which
contains a scalar and very simple coupling constraint:
\begin{subequations}
  \label{eq:bilevel-w-single-cc}
  \begin{align}
    \min_{x,y, \varepsilon} \quad
    & c^\top x + d^\top y
    \\
    \st \quad
    & x \in X, \\
    & \varepsilon = 0,
      \label{eq:bilevel-w-single-cc-single-cc}
    \\
    & (y, \varepsilon) \in \tilde{S}(x).
  \end{align}
\end{subequations}
Here, $\tilde{S}(x)$ is the set of optimal solutions to the $x$-parameterized
lower-level problem
\begin{subequations}
  \label{eq:bilevel-ll-single-ccs}
  \begin{align}
    \min_{y, \varepsilon} \quad
    & f^\top y \\
    \st \quad
    & Ax+By + \varepsilon e \geq a,
      \label{eq:bilevel-ll-single-ccs-fake-coupling}\\
    & Cx + Dy \geq b \label{eq:bilevel-ll-single-ccs-org-ll-cons}, \\
    & \varepsilon \geq 0,
      \label{eq:bilevel-ll-single-ccs-eps-nonnegative}
  \end{align}
\end{subequations}
where $e$ is the vector of ones.
Essentially, $\varepsilon$ is an additional variable of the
follower that measures the violation of the coupling constraints.
The newly introduced coupling
constraint~\eqref{eq:bilevel-w-single-cc-single-cc} enforces that it
equals zero. Most importantly, we have $S(x) = \text{proj}_y(\tilde
S(x))$ for all leader's decisions $x$.
In the next lemma, we show that Problem~\eqref{eq:bilevel-w-single-cc}
is, indeed, equivalent to Problem~\eqref{eq:bilevel-w-ccs}.
\begin{lemma}
  For every bilevel feasible point~$(x,y)$ of
  Problem~\eqref{eq:bilevel-w-ccs}, the point~$(x,y,0)$ is bilevel
  feasible for Problem~\eqref{eq:bilevel-w-single-cc} with the same
  objective value.
  For every bilevel feasible point~$(x,y,\varepsilon)$ of
  Problem~\eqref{eq:bilevel-w-single-cc}, the point~$(x,y)$ is
  bilevel feasible for Problem~\eqref{eq:bilevel-w-ccs} with the same
  objective value.
\end{lemma}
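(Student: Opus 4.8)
The plan is to first establish the projection identity $S(x) = \text{proj}_y(\tilde{S}(x))$ quoted just before the lemma, since both feasibility directions reduce to routine verification once it is in hand. The crucial observation is that the lower-level objective $f^\top y$ does not depend on $\varepsilon$ and that $\varepsilon$ enters the relaxed coupling constraint~\eqref{eq:bilevel-ll-single-ccs-fake-coupling} only to slacken it. Concretely, for a fixed leader decision $x$, a point $(y,\varepsilon)$ is feasible for the lower-level problem~\eqref{eq:bilevel-ll-single-ccs} if and only if $Cx + Dy \geq b$ holds and $\varepsilon \geq \max\{0, \max_i (a - Ax - By)_i\}$. Since such a scalar $\varepsilon$ exists for every $y$ satisfying $Cx + Dy \geq b$, the projection of the relaxed feasible region onto the $y$-space equals $\{y : Cx + Dy \geq b\}$, i.e., exactly the feasible region of the original lower-level problem~\eqref{eq:bilevel-ll}.

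Because $f^\top y$ is unaffected by $\varepsilon$ and the two feasible $y$-sets coincide, the two lower-level problems share the same optimal value $v^\ast(x)$ for every $x$. Consequently, $y \in S(x)$ holds if and only if there is some $\varepsilon \geq 0$ with $(y,\varepsilon) \in \tilde{S}(x)$, which is precisely the identity $S(x) = \text{proj}_y(\tilde{S}(x))$. I expect this verification of equal optimal values to be the only genuinely substantive step; everything that follows is a direct check of the respective constraint systems.

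For the first direction, I would take a bilevel feasible point $(x,y)$ of~\eqref{eq:bilevel-w-ccs}, so that $x \in X$, $Ax + By \geq a$, and $y \in S(x)$. Setting $\varepsilon = 0$, the constraint~\eqref{eq:bilevel-w-single-cc-single-cc} holds trivially, and $(y,0)$ is feasible for~\eqref{eq:bilevel-ll-single-ccs} because $Ax + By + 0 \cdot e \geq a$ reduces to the coupling constraint and $Cx + Dy \geq b$ is inherited from $y \in S(x)$. As $y \in S(x)$ attains the common optimal value $v^\ast(x)$, the pair $(y,0)$ is optimal, i.e., $(y,0) \in \tilde{S}(x)$. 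Hence $(x,y,0)$ is bilevel feasible for~\eqref{eq:bilevel-w-single-cc}, and the objective $c^\top x + d^\top y$ is manifestly unchanged.

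For the converse, I would start from a bilevel feasible $(x,y,\varepsilon)$ of~\eqref{eq:bilevel-w-single-cc}, where~\eqref{eq:bilevel-w-single-cc-single-cc} forces $\varepsilon = 0$. Lower-level feasibility of $(y,0)$ then yields $Ax + By \geq a$, recovering the coupling constraint~\eqref{eq:bilevel-w-ccs:ccs}, while the projection identity gives $y \in S(x)$. Together with $x \in X$ this shows that $(x,y)$ is bilevel feasible for~\eqref{eq:bilevel-w-ccs}, again with the same objective value $c^\top x + d^\top y$.
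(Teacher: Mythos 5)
Your proposal is correct and follows essentially the same route as the paper: the paper also rests on the observation that any $y$ feasible for~\eqref{eq:bilevel-ll} can be lifted to a feasible point $(\tilde{y},\max\set{\norm{a-Ax-B\tilde{y}}_{\infty},0})$ of~\eqref{eq:bilevel-ll-single-ccs} at no cost to the objective, which is exactly your characterization $\varepsilon \geq \max\{0,\max_i (a-Ax-By)_i\}$. The only difference is organizational---you prove the projection identity $S(x)=\text{proj}_y(\tilde{S}(x))$ once up front and invoke it in both directions, whereas the paper states that identity without proof and re-derives the needed pieces inside each direction of the lemma.
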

\begin{proof}
  Let~$(x,y)$ be a bilevel feasible point of
  Problem~\eqref{eq:bilevel-w-ccs}.
  Then, $(x,y,0)$ satisfies the upper- and lower-level constraints of
  Problem~\eqref{eq:bilevel-w-single-cc}.
  The point~$(y,0)$ is optimal for the lower-level
  problem~\eqref{eq:bilevel-ll-single-ccs} since $\varepsilon$ is not
  part of the lower-level objective function and
  $f^{\top}y$ is minimal regarding
  Constraints~\eqref{eq:bilevel-ll-single-ccs-org-ll-cons} due to the
  bilevel feasibility of $(x,y)$ for
  Problem~\eqref{eq:bilevel-w-ccs}.

  Let now $(x,y,\varepsilon)$ be a bilevel feasible point of
  Problem~\eqref{eq:bilevel-w-single-cc}.
  Then, the point~$(x,y)$ satisfies the upper- and lower-level
  constraints of
  Problem~\eqref{eq:bilevel-w-ccs} because $\varepsilon = 0$ holds.
  The point~$y$ is optimal for the lower-level
  problem~\eqref{eq:bilevel-ll} since every optimal
  solution~$\tilde{y}$ to the $x$-parameterized lower-level
  problem~\eqref{eq:bilevel-ll} can be extended to the feasible
  point~$(\tilde{y},\max \set{\norm{a-Ax-By}_{\infty}, 0})$ of the
  $x$-parameterized lower-level
  problem~\eqref{eq:bilevel-ll-single-ccs} with the same objective
  value.

  We finally note that Problem~\eqref{eq:bilevel-w-ccs} and
  Problem~\eqref{eq:bilevel-w-single-cc} have the same upper-level
  objective functions, which proves the claim.
\end{proof}

We now penalize the single coupling
constraint~\eqref{eq:bilevel-w-single-cc-single-cc} of
Problem~\eqref{eq:bilevel-w-single-cc} to obtain a bilevel
problem without coupling constraints.
Moreover, we show that there is a
polynomial-sized (in the bit-encoding length of the original problem's
data)
penalty parameter so that this formulation
is equivalent in terms of optimal solutions.

\begin{theorem}
  There is a polynomial-sized parameter~$\kappa > 0$ so that the
  bilevel problem (without coupling constraints)
  \begin{subequations} \label{eq:bilevel-wo-ccs}
    \begin{align}
      \min_{x,y, \varepsilon} \quad & c^\top x + d^\top y + \kappa
                                      \varepsilon
      \\
      \st \quad & x \in X, \ (y, \varepsilon) \in \tilde{S}(x),
    \end{align}
  \end{subequations}
  has the same set of optimal solutions as
  Problem~\eqref{eq:bilevel-w-single-cc}.
  Again, $\tilde{S}(x)$ is the set of optimal solutions to the
  $x$-parameterized lower-level
  problem~\eqref{eq:bilevel-ll-single-ccs}.
\end{theorem}
The idea of the proof is as follows.
First, we derive a single-level reformulation of the bilevel
problem~\eqref{eq:bilevel-w-single-cc}, using the KKT conditions of
the follower's problem \eqref{eq:bilevel-ll-single-ccs}.
Second, we apply results from augmented Lagrangian duality theory
for mixed-integer linear problems to show that a polynomial-sized
exact penalization parameter exists. Finally, we show that the resulting
mixed-integer linear program is nothing but the KKT reformulation of
Problem~\eqref{eq:bilevel-wo-ccs}.
\begin{proof}
  Since the lower-level problem~\eqref{eq:bilevel-ll-single-ccs}
  of Problem~\eqref{eq:bilevel-w-single-cc} is a
  linear program, we can replace it with its KKT
  conditions \parencite{Dempe-Dutta:2012}, leading to
  \begin{subequations}
    \label{kkt-formulation}
    \begin{align}
      \min_{x,y, \varepsilon} \quad
      & c^\top x + d^\top y
      \\
      \st \quad
      & x \in X, \ \varepsilon = 0,
      \\
      & Ax + By + \varepsilon e \geq a, \
        Cx + Dy \geq b, \
        \varepsilon \geq 0, \\
      & B^{\top} \lambda + D^{\top} \mu  = f, \ e^{\top} \lambda +
        \eta = 0, \\
      & \lambda, \mu, \eta \geq 0, \\
      &\lambda^{\top} (Ax + By + \varepsilon e - a) = 0, \
        \mu^{\top} (Cx + Dy - b) = 0, \ \eta \varepsilon = 0.
        \label{kkt-formulation:complementarity}
    \end{align}
  \end{subequations}
  Using additional binary variables~$z^{\lambda}, z^{\mu}, z^{\eta}$,
  and a sufficiently large big-$M$ value, we can reformulate the
  complementarity constraints~\eqref{kkt-formulation:complementarity}
  as the mixed-integer linear constraints
  \begin{subequations}
    \label{eq:linearized-compl}
    \begin{align}
      &\lambda \leq (e-z^{\lambda}) M, \ \mu \leq (e-z^{\mu}) M, \
        \eta \leq (1-z^{\eta}) M, \\
      &Ax + By +\varepsilon e -a \leq z^{\lambda} M, \
        Cx + Dy-b \leq z^{\mu} M, \ \varepsilon \leq z^{\eta} M.
    \end{align}
  \end{subequations}
  It is shown in \textcite{Buchheim:2023} that a valid and polynomial-sized
  value for $M$ can be computed in polynomial time.
  Thus, the resulting problem is a mixed-integer linear program whose
  input data is polynomial-sized in the bit-encoding length of the
  input data of Problem~\eqref{eq:bilevel-w-single-cc}.
  We now penalize the constraint~$\varepsilon = 0$ in the
  $\ell_{\infty}$-sense and obtain
  \begin{subequations}
    \label{penalty-kkt-formulation}
    \begin{align}
      \min_{x,y, \varepsilon} \quad
      & c^\top x + d^\top y + \kappa \varepsilon
      \\
      \st \quad
      & x \in X,
      \\
      & Ax + By + \varepsilon e \geq a, \
        Cx + Dy \geq b, \
        \varepsilon \geq 0, \\
      & B^{\top} \lambda + D^{\top} \mu  = f, \ e^{\top} \lambda +
        \eta = 0, \\
      & \lambda, \mu, \eta \geq 0, \ \eqref{eq:linearized-compl}.
    \end{align}
  \end{subequations}
  \rev{The existence of a finite and exact value for the penalty parameter~$\kappa$
  is guaranteed
  by Theorem~4 of~\textcite{Feizollahi2016},
  which states that the duality gap for the augmented Lagrangian dual
  of a solvable (mixed-integer) linear optimization problem
  can be closed by using a norm as the augmenting function
  and a sufficiently large but finite penalty parameter.}
  Moreover, Proposition~1  of~\textcite{Feizollahi2016}
  ensures that the sets of optimal solutions
  of~\eqref{penalty-kkt-formulation} and~\eqref{kkt-formulation} are the same.
  \rev{\textcite{Gu2020} show in Theorem~22 that
  the penalty parameter can be chosen to be of polynomial size
  in case of the $\ell_{\infty}$-norm.}
  Finally, Problem~\eqref{penalty-kkt-formulation} is the KKT
  reformulation of the bilevel problem~\eqref{eq:bilevel-wo-ccs}.
\end{proof}

\rev{Note that the results from the literature that we use to conclude
  the proof are not constructive but pure existence results.
  Hence, we also do not state an explicit formula or big-$O$ expression
  for the value or the size of the parameter~$\kappa$ here.}

\begin{corollary}
  \label{cor:final}
  There is a polynomial-sized penalty parameter~$\kappa > 0$ so
  that the following holds.
  For every bilevel optimal solution~$(x,y)$ to
  Problem~\eqref{eq:bilevel-w-ccs}, the point~$(x,y,0)$ is bilevel
  optimal to Problem~\eqref{eq:bilevel-wo-ccs} with the same
  objective value.
  For every bilevel optimal point~$(x,y,\varepsilon)$ of
  Problem~\eqref{eq:bilevel-wo-ccs}, the point~$(x,y)$ is
  bilevel optimal to Problem~\eqref{eq:bilevel-w-ccs} with the same
  objective value.
\end{corollary}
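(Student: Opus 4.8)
The plan is to obtain the corollary with no new machinery, purely by chaining the two results already available. The lemma relates Problem~\eqref{eq:bilevel-w-ccs} and Problem~\eqref{eq:bilevel-w-single-cc} at the level of feasible points with matching objective values, and the theorem relates Problem~\eqref{eq:bilevel-w-single-cc} and Problem~\eqref{eq:bilevel-wo-ccs} at the level of optimal solutions. I would fix $\kappa$ to be exactly the polynomial-sized parameter supplied by the theorem and then argue transitivity of optimality across the three problems, being careful about the objective term $\kappa\varepsilon$ at the end.

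First I would upgrade the lemma from feasibility to optimality. Since the lemma sends every feasible point of one problem to a feasible point of the other with identical objective value, the two optimal values coincide. Consequently, an optimal $(x,y)$ for Problem~\eqref{eq:bilevel-w-ccs} yields a feasible $(x,y,0)$ for Problem~\eqref{eq:bilevel-w-single-cc} attaining the common optimal value, hence optimal there; conversely, an optimal $(x,y,\varepsilon)$ for Problem~\eqref{eq:bilevel-w-single-cc} has $\varepsilon = 0$ by its coupling constraint~\eqref{eq:bilevel-w-single-cc-single-cc}, so $(x,y)$ is feasible for Problem~\eqref{eq:bilevel-w-ccs} at the common optimal value and thus optimal. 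Next I would invoke the theorem verbatim: for the chosen $\kappa$, Problems~\eqref{eq:bilevel-w-single-cc} and~\eqref{eq:bilevel-wo-ccs} have the \emph{same} set of optimal solutions in the $(x,y,\varepsilon)$-space. Composing the two correspondences then gives both implications. The objective values I would reconcile as follows: Problems~\eqref{eq:bilevel-w-ccs} and~\eqref{eq:bilevel-w-single-cc} carry the same objective $c^\top x + d^\top y$, while Problem~\eqref{eq:bilevel-wo-ccs} adds the term $\kappa\varepsilon$; because all matched optimal points have $\varepsilon = 0$, this extra term vanishes and the three objective values agree.

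There is no genuine obstacle here, since the corollary is a formal consequence of the lemma and the theorem. The one subtlety worth spelling out explicitly, rather than assuming silently, is that $\varepsilon = 0$ at \emph{every} optimal solution of Problem~\eqref{eq:bilevel-wo-ccs}---a problem that no longer enforces $\varepsilon = 0$ as a constraint. This is not an assumption but a consequence: the theorem asserts that the optimal set of Problem~\eqref{eq:bilevel-wo-ccs} coincides with that of Problem~\eqref{eq:bilevel-w-single-cc}, where $\varepsilon = 0$ is a hard constraint, so every optimizer of Problem~\eqref{eq:bilevel-wo-ccs} inherits $\varepsilon = 0$. Getting this derivation stated clearly is the main thing to attend to when writing out the proof.
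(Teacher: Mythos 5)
Your proposal is correct and matches the paper's (implicit) argument: the corollary is stated without proof precisely because it follows by chaining the lemma and the theorem exactly as you describe, with the observation that $\varepsilon = 0$ at every optimizer of Problem~\eqref{eq:bilevel-wo-ccs} being inherited from the identification of its optimal set with that of Problem~\eqref{eq:bilevel-w-single-cc}. Your write-up makes explicit the one step worth spelling out, namely the upgrade of the lemma from feasibility to optimality via the equality of optimal values.
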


\begin{remark}
  \begin{enumerate}
  \item[(a)] Let us note that the reformulations
    of mixed-integer linear programs as linear bilevel problems
    presented in~\textcite{Vicente-et-al:1996}
    and~\textcite{Audet-et-al:1997}
    use two strategies for enforcing that
    continuous lower-level variables are binary.
    One is based on coupling constraints of the form~$v = 0$,
    where $v$ is an auxiliary lower-level variable, and the other one
    corresponds to an exact penalization of these coupling constraints
    and therefore requires no coupling constraints in the final model.
    Our approach to penalize general coupling constraints
    in linear bilevel problems uses the same idea in the final step.
  \item[(b)] The derivations in this section suggest that one could
    potentially extend the class of bilevel problems to which the
    results can be applied.
    First, the results in~\textcite{Gu2020} are valid for
    convex mixed-integer quadratic programs (MIQPs).
    Hence, this would allow for convex-quadratic objective functions
    of the leader and further integrality constraints in~$X$.
    Moreover, we can also allow for convex-quadratic (but still
    continuous) problems in the lower level since their KKT conditions
    lead to polyhedral constraints in the KKT
    reformulation~\eqref{kkt-formulation}.
    However, the big-$M$s from \textcite{Buchheim:2023} cannot be
    used directly anymore.
    Given that valid big-$M$s can also be found for quadratic programs
    in the lower level, the most general class of bilevel problems to
    which our results could be applied are those with convex MIQPs for
    the leader and convex QPs for the follower.
  \item[(c)] Due to the finiteness of the penalty parameter in
    Corollary~\ref{cor:final}, the original bilevel problem with
    coupling constraints can also be solved by a finite sequence of
    bilevel problems without coupling constraints if we follow
    standard ideas of penalty methods.
  \end{enumerate}
\end{remark}


\section{Discussion}
\label{sec:discussion}

It has been known for at least 25~years that coupling constraints
in linear bilevel problems can lead to disconnected feasible sets and
that this is not possible without coupling constraints.
However, we prove that there is no difference between these two types
of linear bilevel problems on the level of optimal solutions.
While, on the one hand, this closes a gap in the literature on linear
bilevel optimization, it, on the other hand, also has some practical
consequences.
Many theoretical statements in the literature on bilevel optimization
are made and shown for problems without coupling constraints---either
simply for the ease of presentation or due to a lack of a proof for
the case with coupling constraints.
This note now allows for carrying over some of these results by
transforming the given problem having coupling constraints into one
without.
We also point to the open question of efficient computation of
the penalty parameter required in the last section.
While we prove that it is polynomial-sized (in the bit-encoding
length of the data of the given problem), the question on how to
compute it in polynomial time is still open.
Finally, the analogue question regarding the impact of coupling
constraints is, to the best of our knowledge, still open for
pessimistic bilevel problems.
\rev{We are rather convinced that the approach applied in this paper
  cannot be directly transferred to the pessimistic case.
  Hence, the latter is an interesting and important research
  question.}


\section*{Acknowledgements}

The second and third author acknowledge the support by the German
Bundesministerium für Bildung und Forschung within
the project \enquote{RODES} (Förderkennzeichen 05M22UTB).


\section*{Availability of data and materials}
\label{sec:avail-data-mater}

No data or code were generated or used during the study.


\printbibliography

\end{document}